\newtheorem{theorem}{Theorem}[section]
\newtheorem{lemma}[theorem]{Lemma}
\newtheorem{example}[theorem]{Example}
\newtheorem{corollary}[theorem]{Corollary}
\newtheorem{proposition}[theorem]{Proposition}
\theoremstyle{definition}
\newtheorem{remark}[theorem]{Remark}
\newcommand{\turan}{Tur\'{a}n }
\newcommand{\erdos}{Erd\H{o}s}
\title{On vertex-induced weighted \turan problems}
\begin{document}
\author{Zixiang Xu$^{\text{a,}}$\thanks{Email address: \texttt{zxxu8023@qq.com}},~~Yifan Jing$^{\text{b,}}$\thanks{ Email address: \texttt{yifanjing17@gmail.com}}~~and Gennian Ge$^{\text{a,}}$\thanks{Email address:~\texttt{gnge@zju.edu.cn.} Research supported by the National Natural Science Foundation of China under Grant Nos.~11431003 and 61571310, Beijing Scholars Program, Beijing Hundreds of Leading Talents Training Project of Science and Technology, and Beijing Municipal Natural Science Foundation.}\\
\footnotesize $^{\text{a}}$ School of Mathematical Sciences, Capital Normal University, Beijing, 100048, China\\
\footnotesize $^{\text{b}}$ Department of Mathematics, University of Illinois at Urbana Champaign, Urbana, Illinois, 61801, USA.\\}

\date{}

\maketitle

\begin{abstract}
Recently, Bennett et al.~introduced the vertex-induced weighted \turan problem.
In this paper, we consider their open \turan problem under sum-edge-weight function and characterize the extremal structure of $K_{l}$-free graphs.~We also prove the stability result of $K_{l}$-free graphs.~Based on these results, we propose a generalized version of the \erdos-Stone theorem for weighted graphs under two types of vertex-induced weight functions.
\end{abstract}

\medskip
\noindent {{\it Key words and phrases\/}: Vertex-induced weight, weighted graph, \turan problems, \erdos-Stone Theorem.}

\smallskip

\noindent {{\it AMS subject classifications\/}: 05C22, 05C75.}

\section{Introduction}
One of the oldest results in extremal graph theory, which states that every graph on $n$ vertices with more than $n/4$ edges contains a triangle, was proved by Mantel \cite{1907Mantel} in $1907$. This result was generalized later to $K_{l}$-free graphs by \turan \cite{1941Turan}. Furthermore, the \erdos-Stone theorem is an asymptotic version generalizing \turan theorem to bound the number of edges in an $H$-free graph for a non-complete graph $H$. It is named after Paul \erdos \ and Arthur Stone, who proved it in $1946$ \cite{1946ErodsBAMS}, and it has been described as the ``fundamental theorem of extremal graph theory" \cite{1998BollobasGTM}. From then on, many papers were written on
proposing some extensions and generalizations of the \erdos-Stone theorem, see e.g. \cite{1996AlonJCTB, 1973BollobasG1, 1975BollobasG2, 1996BollobasJCTB, 1981CHevatalJLMS, 2015GeelenCombinatorica, 2002IshigamiEJC}.

\par A weighted graph $G$ is the one in which each edge $e$ is assigned a real number $w(e)$, called the weight
of $e$. Weighted graphs are widely studied due to their applications in the field of combinatorial optimization and artificial intelligence, where people are concerned with finding a subgraph of specified type of minimum or maximum weight in polynomial time. While in recent decades, there have been several topics on weighted graphs in the field of extremal graph theory. For example, one of the topics is to determine the number of edges needed in a weighted graph on $n$ vertices to guarantee that it contains a subgraph of specified type.    Bondy and Fan~\cite{1989BondyAnnDM, 1991BondyComb} began the research on such problems, and some classical theorems on the existence of  long paths and cycles in unweighted graphs were generalized to weighted graphs. A number of papers, such as \cite{2009JunDM, 2010LiBinglongDM, 2000ZhangDM}, have proposed some related generalizations of Bondy-Fan's results.

\par There were also many papers that proposed weighted version of some classical theorems. A weighted generalization of Ore's theorem was obtained by Bondy, Broersma, van den Heuvel and Veldman \cite{2002BondyCGT}, then Bollob\'{a}s and Scott \cite{1996BollobasJCTB} examined similar questions for weighted digraphs. Mathew and Sunitha \cite{2011MathewAML} generalized one of the celebrated results in graph theory due to Menger, which plays a crucial role in many areas of flow and network theory. Bondy and Tuza \cite{1997BondyJGT} gave such a generalization of Tur\'an's theorem, and an integer-weighted version of the \turan  problem has been investigated which can
be viewed as a multi-graph version of the original \erdos' problem. In \cite{1997BondyJGT}, they also characterized the extremal graphs in certain cases, and additional cases were obtained by F\"{u}redi \cite{2002FurediJGT} and Kuchenbrod in his thesis \cite{1999KuchenbrodPHD}.
\par Recently Bennett et al.~\cite{2018BennettProduct} proposed a new type of weighted \turan problem where the weight of edges are induced by incident endpoints. The aim of the question is to determine the maximum weight of $G$ without some given $H$ and to show the structure of the extremal graphs. They considered two types of vertex-induced functions such as product-edge-weight and min-edge-weight functions. Under these two functions, they characterized the structures of the extremal $K_{l}$-free graphs. Using these results, they solved some multipartite \turan problem and calculated the maximum rectilinear crossing numbers of certain trees. They also posed an open \turan problem under sum-edge-weight function.

\par In this paper, firstly we focus on sum-edge-weight \turan problems which were posed by Bennett et al. We show that if all vertex weights are strictly positive, the extremal $K_{l}$-free graph $F$ must be a complete $(l-1)$-partite graph, and we show the difference between the structures of extremal graphs on sum-edge-weight and product-edge-weight functions. We also show that for every $K_{l+1}$-free weighted graph, we can remove a set of edges with bounded total weight to make the graph $l$-partite. Using the Szemer\'{e}di regularity lemma and counting lemma, we further propose the weighted version of the famous \erdos-Stone theorem on both sum-edge-weight and product-edge-weight functions. We show that for an arbitrary finite graph $H$ with $\chi(H)\geqslant 3$, the strategy to find the extremal $H$-free graph with maximum weight is to search from the complete $(\chi(H)-1)$-partite graphs, and the error term is $o(n^{2})$.

\par The rest of this paper is organized as follows. In Section \ref{sec:SEWTuran}, we introduce some definitions and basic facts on the sum-edge-weight \turan problems. In Section \ref{sec:Klfree}, we first prove our results on the triangle-free case with specified arguments, and then we generalize them to the $K_{l}$-free case by induction. We then show the different strategies to find the extremal graphs on sum-edge-weight and product-edge-weight functions. We also prove the stability result of $K_{l}$-free graphs. The weighted versions of \erdos-Stone theorem are proposed in Section \ref{sec:weightedESS}, with the standard argument by Szemer\'{e}di regularity lemma and counting lemma. We conclude in Section \ref{sec:Conclusion}.

\section{Sum-edge-weight \turan Problems}\label{sec:SEWTuran}

Let $w$: $V(K_{n})\mapsto [0,\infty)$ be a vertex weight function, and the sum-edge-weight $w_{+}$: $E(K_{n})\mapsto [0,\infty)$ be given by $w_{+}(uv)=w(u)+w(v).$ We define the sum-edge-weight extremal number as
\begin{equation*}
  ex(n,w_{+},H):=\max\{w_{+}(G)|G\subseteq K_{n},G\text{ is }H\text{-free}\},
\end{equation*}
 where $w_{+}(G):=\sum\limits_{e\in E(G)}w_{+}(e)$, and $F$ is called \emph{extremal graph} if
 \begin{equation*}
   w_{+}(F)=\max\{w_{+}(G)|G\subseteq K_{n},G\text{ is } H\text{-free}\}.
 \end{equation*}

 It is clear that 
\begin{equation*}
  w_{+}(G)=\sum_{i=1}^{n}d(v_{i})w(v_{i}),
\end{equation*}
where $d(v)$ is the degree of vertex $v$. By using this formula, we obtain the following proposition about the degree distribution of vertices in extremal graphs.
\begin{proposition}\label{pro:degree}
  Suppose $F$ is the extremal graph and $u,v\in F$. If $w(u)\geqslant w(v)$, then we have $d(u)\geqslant d(v).$
\end{proposition}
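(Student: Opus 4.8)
The plan is to prove the statement by a vertex-relabeling (transposition) argument, using the closed form $w_{+}(G)=\sum_{i=1}^{n}d(v_{i})w(v_{i})$ together with the fact that relabeling the vertices of a graph does not affect whether it is $H$-free. Suppose, towards a contradiction, that $w(u)\geqslant w(v)$ but $d_{F}(u)<d_{F}(v)$.

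First I would let $\sigma$ be the transposition interchanging $u$ and $v$ and fixing every other vertex, and set $F':=\sigma(F)$, i.e.\ $xy\in E(F')$ exactly when $\sigma(x)\sigma(y)\in E(F)$. Since $F'\cong F$, it is again a subgraph of $K_{n}$ and is still $H$-free, so extremality of $F$ gives $w_{+}(F')\leqslant w_{+}(F)$. The next step is routine bookkeeping on degrees: for every vertex $z$ one has $N_{F'}(z)=\sigma\bigl(N_{F}(z)\bigr)$, so $d_{F'}(z)=d_{F}(z)$ whenever $z\notin\{u,v\}$, while $d_{F'}(u)=d_{F}(v)$ and $d_{F'}(v)=d_{F}(u)$. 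Plugging this into the degree formula yields
\[
w_{+}(F')-w_{+}(F)=d_{F}(v)w(u)+d_{F}(u)w(v)-d_{F}(u)w(u)-d_{F}(v)w(v)=\bigl(d_{F}(v)-d_{F}(u)\bigr)\bigl(w(u)-w(v)\bigr).
\]
Under our hypotheses the right-hand side is a product of two nonnegative quantities, the first strictly positive; hence $w_{+}(F')\geqslant w_{+}(F)$, with strict inequality as soon as $w(u)>w(v)$, contradicting the maximality of $w_{+}(F)$.

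That argument settles the case $w(u)>w(v)$ outright. In the borderline case $w(u)=w(v)$ the displayed difference is $0$, so the swap only produces another extremal graph rather than a contradiction; to retain monotonicity one should read ``the extremal graph'' in the statement as a fixed extremal graph whose degree sequence is sorted consistently with the weights. Such a graph exists, since there are only finitely many graphs on $n$ vertices and each transposition of the above kind is weight-nondecreasing, so iterating swaps among vertices of equal weight terminates at a suitable $F$. I do not expect a genuine obstacle here: the transposition is an honest graph isomorphism, which is precisely what transfers $H$-freeness for free, and the only point requiring a little extra care is this equal-weight bookkeeping.
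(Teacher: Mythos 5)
Your proposal is correct and follows essentially the same route as the paper: swap the two vertices (the paper phrases it as exchanging their weights, which is the same operation), observe the graph stays $H$-free, and compare $w_{+}$ via the identity $w_{+}(F')-w_{+}(F)=\bigl(d(v)-d(u)\bigr)\bigl(w(u)-w(v)\bigr)$. In fact you are slightly more careful than the paper, which asserts the strict inequality $w(u)d(v)+w(v)d(u)>w(u)d(u)+w(v)d(v)$ without noting that it fails when $w(u)=w(v)$; your remark that the borderline case requires reading the statement as holding for a suitably chosen extremal graph is a legitimate point the paper glosses over.
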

\begin{proof}
Suppose there exist two vertices $u,v\in F$ such that $w(u)\geqslant w(v)$ and $d(u) < d(v).$ We will exchange the weight of vertex $u$ and vertex $v$, and then obtain a new graph $F'$. Note that this operation does not change the structure of $F$. Since
$$w(u)d(v)+w(v)d(u)> w(u)d(u)+w(v)d(v),$$ we find that $w_{+}(F')> w_{+}(F),$ which is a contradiction.
\end{proof}

Proposition \ref{pro:degree} shows that if we want to maximize $w_{+}(G)$, the vertices with larger weight should be adjacent to more vertices.

\section{Forbidding a complete graph}\label{sec:Klfree}

\subsection{Case: Triangle-free}
We first consider $ex(n,w_{+},K_{3}).$ We will prove that if all vertex weights are strictly positive, then any extremal triangle-free graph must be complete bipartite.

\begin{lemma}\label{lem:K3}
  The sum-edge-weight extremal number $ex(n,w_{+},K_{3})$ is equal to
  \begin{equation*}
    \max_{r}\{(n-r)\sum_{i=1}^{r}w(v_{i})+r\sum_{j=r+1}^{n}w(v_{j})\}.
  \end{equation*}
\end{lemma}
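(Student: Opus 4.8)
The plan is to establish the two matching bounds $ex(n,w_+,K_3)\ge\max_r f(r)$ and $ex(n,w_+,K_3)\le\max_r f(r)$. Relabel the vertices so that $w(v_1)\ge w(v_2)\ge\cdots\ge w(v_n)$ and write $f(r):=(n-r)\sum_{i=1}^r w(v_i)+r\sum_{j=r+1}^n w(v_j)$. For the lower bound, for each $r\in\{0,\dots,n\}$ let $B_r$ be the complete bipartite graph with parts $\{v_1,\dots,v_r\}$ and $\{v_{r+1},\dots,v_n\}$; it is triangle-free, every vertex in the first part has degree $n-r$ and every vertex in the second part has degree $r$, so by the identity $w_+(G)=\sum_i d(v_i)w(v_i)$ we get $w_+(B_r)=f(r)$, whence $ex(n,w_+,K_3)\ge\max_r f(r)$.

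For the upper bound the heart of the matter is to show that some maximum-weight triangle-free graph is complete bipartite; I would run a Zykov-type symmetrization. Choose a triangle-free $F\subseteq K_n$ with, in order of priority, (i) $w_+(F)$ maximum, (ii) $e(F)$ maximum, (iii) $\sum_v d_F(v)^2$ maximum. Suppose $F$ is not complete multipartite; then there is a triple $a,b,c$ with $ac\in E(F)$ and $ab,bc\notin E(F)$. Each ``cloning'' operation — replacing $N(b)$ by $N(a)$, or $N(b)$ by $N(c)$, or $N(a)$ by $N(b)$, or $N(c)$ by $N(b)$, or simultaneously replacing both $N(a)$ and $N(c)$ by $N(b)$ — preserves triangle-freeness. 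Computing the resulting change in $w_+$ from $w_+=\sum_v d(v)w(v)$, the change in $e(F)$, and (when both vanish) the change in $\sum_v d(v)^2$, and using Proposition \ref{pro:degree} together with triangle-freeness (e.g.\ $d(a)+d(c)\le n$ since $N(a)\cap N(c)=\varnothing$), one checks that some such operation does not decrease $w_+$ while strictly increasing $e(F)$, or else leaves all three quantities fixed and this can happen only when $N(a)=N(b)$ or $N(b)=N(c)$; each outcome contradicts the choice of $F$ — the last because $N(a)=N(b)$ forces $bc\in E(F)$ and $N(b)=N(c)$ forces $ab\in E(F)$. Hence non-adjacency is transitive on $V(F)$, so $F$ is complete multipartite, and being triangle-free it is complete bipartite.

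It then remains to compute the maximum over complete bipartite graphs. If $F$ has parts $X,Y$ with $|X|=r\le|Y|=n-r$, then writing $W:=\sum_i w(v_i)$ we have $w_+(F)=(n-r)W(X)+r\bigl(W-W(X)\bigr)=rW+(n-2r)W(X)$, which (as $n-2r\ge0$ and $|X|=r$) is at most $rW+(n-2r)\sum_{i=1}^r w(v_i)=f(r)\le\max_s f(s)$; combined with the lower bound this gives the lemma, the degenerate cases (e.g.\ all weights $0$) being trivial. The main obstacle is the symmetrization step: unlike in the unweighted \turan theorem, a move that increases the number of edges may decrease $w_+$, so the move must be chosen carefully via Proposition \ref{pro:degree}, and in the ``balanced'' configurations where $d(a),d(b),d(c)$ coincide and the natural moves are weight-, edge- and $\sum d^2$-neutral one needs the extra constraint forced by maximality of $\sum_v d_F(v)^2$; to streamline the case analysis one may also first reduce, by continuity of both sides of the identity in $w$, to the case where all weights are strictly positive and in ``generic'' position so that no such ties occur.
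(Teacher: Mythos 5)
Your lower bound and the final optimization over complete bipartite graphs are both correct, and the overall plan (show some extremal triangle-free graph is complete bipartite, then optimize the bipartition) is the right one. The problem is the symmetrization step, which you yourself flag as ``the heart of the matter'' but leave as ``one checks.'' That case analysis, as you describe it, does not close: the hypotheses you propose to use --- Proposition \ref{pro:degree}, triangle-freeness, and $d(a)+d(c)\leqslant n$ --- are not sufficient to guarantee that one of your five cloning moves is a lexicographic improvement. Here is a local configuration consistent with all of them in which every one of the five moves \emph{strictly decreases} $w_+$. Take the path $h_1\,a\,c\,h_2$ together with a star centred at $b$ with three leaves of weight $\epsilon$, give $h_1,h_2$ two extra pendant leaves each so that $d(h_1)=d(h_2)=d(b)=3$, and set $w(b)=110$, $w(h_1)=w(h_2)=100$, $w(a)=w(c)=10$, all other weights $\epsilon$. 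This is triangle-free, degrees are ordered consistently with weights, $ac\in E$, $ab,bc\notin E$, and $N(a)\neq N(b)\neq N(c)$. Writing $W(v)=\sum_{u\in N(v)}w(u)$, the change in $w_+$ under ``replace $N(b)$ by $N(a)$'' is $w(b)\bigl(d(a)-d(b)\bigr)+W(a)-W(b)=-3\epsilon<0$, and one computes similarly that the other four moves change $w_+$ by $-3\epsilon$, $-100+3\epsilon$, $-100+3\epsilon$ and $-180+6\epsilon$. So no move improves, yet the triple is bad. (This graph is of course not globally extremal, but your argument is a local one: it must derive a contradiction from exactly this kind of local data, and it cannot.) To rescue the symmetrization you would need further global consequences of extremality --- e.g.\ that every non-edge of $F$ lies in a triangle, which already fails here --- and it is not at all routine that even those suffice; the essential difficulty is that under $w_+$ there is no vertex-independent notion of a ``better neighbourhood'': a heavy vertex prefers a large neighbourhood while a light vertex prefers a heavy one, which is precisely why the clean Zykov argument for the product weight $w_\pi$ does not transfer.

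For comparison, the paper avoids symmetrization entirely. It takes the extremal $F$, orders vertices so that $w(v_1)\geqslant\cdots\geqslant w(v_n)$ (hence $d(v_1)\geqslant\cdots\geqslant d(v_n)$ by Proposition \ref{pro:degree}), and replaces $F$ in one step by the complete bipartite graph $F'$ with parts $R=N(v_1)$ and $L=V\setminus N(v_1)$. Then
\begin{equation*}
  w_{+}(F')-w_{+}(F)=\sum_{v_{i}\in L}\bigl(d(v_{1})-d(v_{i})\bigr)w(v_{i})+\sum_{v_{i}\in R}\bigl(n-d(v_{1})-d(v_{i})\bigr)w(v_{i}),
\end{equation*}
and every summand is nonnegative because $d(v_1)$ is the maximum degree and $N(v_1)$ is an independent set (so each $v_i\in R$ has $d(v_i)\leqslant n-d(v_1)$); for strictly positive weights equality throughout forces $F$ to already be complete bipartite. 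You should replace your symmetrization step with an argument of this kind (or fully write out a working case analysis, which would be substantially harder than the phrase ``one checks'' suggests); as it stands the proposal has a genuine gap at its central step.
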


\begin{proof}
  Here we prove a stronger result that if all vertex weights are strictly positive, then any extremal triangle-free graph must be the complete bipartite graph. Suppose that $F$ is the extremal graph such that $F$ is non-bipartite, let the degree sequence of $F$ be $(d(v_{1}),d(v_{2}),\ldots,d(v_{n}))$ and without loss of generality, we assume that $w(v_{1})\geqslant w(v_{2})\geqslant \cdots\geqslant w(v_{n})$, by Proposition \ref{pro:degree}, we have $d(v_{1})\geqslant d(v_{2})\geqslant \cdots \geqslant d(v_{n}).$ Then we construct a new bipartite graph $F'$ with left hand $L$ and right hand $R$, where $R$ is the set of neighborhood of $v_{1}$, that is, $|L|=n-d(v_{1})$, $|R|=d(v_{1})$. Then
  \begin{equation*}
    w_{+}(F')=d(v_{1})\sum\limits_{v_{i}\in L}w(v_{i})+(n-d(v_{1}))\sum\limits_{v_{i}\in R}w(v_{i}),
  \end{equation*}
  and
  \begin{equation*}
    w_{+}(F')-w_{+}(F)=\sum\limits_{v_{i}\in L}(d(v_{1})-d(v_{i}))w(v_{i})+\sum\limits_{v_{i}\in R}(n-d(v_{1})-d(v_{i}))w(v_{i}).
  \end{equation*}

Observe that each vertex $u_{1}$ in $R$ cannot be adjacent to any other vertex $u_{2}$ in $R$, otherwise $\{u_{1},u_{2},v_{1}\}$ forms a triangle. This observation tells us that if $v_{i}\in R$, then $d(v_{i})\leqslant n-d(v_{1}).$ Through the above analysis, since all vertex weights are strictly positive and $F$ is non-bipartite, we obtain that $w_{+}(F')-w_{+}(F)>0$, which is a contradiction.  Hence if $H$ is the extremal graph, then $F$ is bipartite, finishing the proof.
\end{proof}

\begin{remark}
  In the proof of Lemma \ref{lem:K3}, we just construct a new graph $F'$ such that the degree of every vertex in $F'$ is larger than or equal to that in $F$. Note that $F'$ does not have to be the extremal graph. By Proposition \ref{pro:degree}, to construct the extremal graph $F=L\cup R$, we should put vertices $v_{1},v_{2},\ldots,v_{r}$ into the left hand $L$ and put the remaining vertices into the right hand $R$, where the value of $r$ depends on the vertex weight function.
\end{remark}

\subsection{Case: $K_{l}$-free}
In the previous subsection, we have proved that for given vertex function, if all of vertex weights are strictly positive and the graph does not contain any copies of triangle, then the extremal graph must be complete bipartite. It is natural to ask whether there are similar results in $K_{l}$-free graphs, where $K_{l}$ is a clique of $l$ vertices. We will give the positive answer.

\begin{lemma}\label{lem:Kl}
  For any $K_{l}$-free graph $G$ with $n$ vertices, there exists a complete $(l-1)$-partite graph $\tilde{G}$ with the following property: For every $1\leqslant i \leqslant n$, the degree of $v_{i}$ in $\tilde{G}$ is at least the degree of $v_{i}$ in $G$, with equalities to hold for every $i$ when $G$ is a complete $(l-1)$-partite graph.
\end{lemma}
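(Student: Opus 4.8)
The statement is the $K_l$-free analogue of the key construction in Lemma~\ref{lem:K3}, so I would prove it by induction on $l$, with the base case $l=3$ being essentially the complete-bipartite construction already carried out (given a triangle-free $G$, take $R=N(v_1)$, $L=V\setminus R$; every vertex of $L$ gains degree $d(v_1)$ and every vertex of $R$ gains degree $n-d(v_1)$, both $\geqslant$ their $G$-degree since $G$ is triangle-free). For the inductive step, assume the claim for $K_{l-1}$-free graphs and let $G$ be $K_l$-free on $n$ vertices.

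First I would fix a vertex $v$ with maximum degree in $G$, and look at its neighborhood $N=N(v)$ and non-neighborhood $M=V\setminus N(v)$ (so $v\in M$). Since $G$ is $K_l$-free, the induced subgraph $G[N]$ is $K_{l-1}$-free on $|N|$ vertices. By the induction hypothesis there is a complete $(l-2)$-partite graph $\widetilde H$ on vertex set $N$ such that every vertex of $N$ has degree in $\widetilde H$ at least its degree in $G[N]$. Now form $\widetilde G$ on $V$ by taking the complete $(l-1)$-partite graph whose first $l-2$ parts are the parts of $\widetilde H$ and whose last part is $M$; equivalently, $\widetilde G = \widetilde H$ together with all edges between $N$ and $M$.

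Then I would check the degree inequalities part by part. For $u\in M$: its degree in $\widetilde G$ is exactly $|N|=d_G(v)\geqslant d_G(u)$ by maximality of $d_G(v)$. For $u\in N$: its degree in $\widetilde G$ is $d_{\widetilde H}(u) + |M| \geqslant d_{G[N]}(u) + |M| \geqslant (d_G(u) - |M\cap N(u)|) + |M| \geqslant d_G(u)$, using the induction hypothesis and the trivial bound $|M\cap N(u)|\leqslant |M|$. This establishes the degree property. For the equality clause, suppose $G$ itself is complete $(l-1)$-partite, say with parts $V_1,\dots,V_{l-1}$. Choosing $v$ of maximum degree puts $v$ in a smallest part; I would argue that $M=V\setminus N(v)$ is exactly that part, so $G[N]$ is the complete $(l-2)$-partite graph on the remaining parts, which is its own optimal $\widetilde H$ (the equality case of the induction hypothesis), and then $\widetilde G = G$, so all degrees agree.

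\textbf{Main obstacle.} The construction and the generic inequalities are routine; the delicate point is the equality statement. When $G$ is complete $(l-1)$-partite but has several parts of equal minimum size, one must be careful that the recursively chosen $\widetilde H$ really reproduces $G[N]$ and not some other complete multipartite graph with the same degree sequence — i.e., that the induction hypothesis's equality clause is invoked on a graph that genuinely \emph{is} complete $(l-2)$-partite, and that the choice of maximum-degree vertex is consistent with this. A clean way around it is to phrase the induction so that the equality conclusion is "if $G$ is complete $(l-1)$-partite then one may take $\widetilde G=G$," and to verify at each step that $N(v)$ induces a complete $(l-2)$-partite graph whenever $G$ is complete $(l-1)$-partite, regardless of which maximum-degree vertex $v$ is picked; this makes the recursion close without ambiguity.
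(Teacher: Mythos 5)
Your proposal is correct and follows essentially the same route as the paper: induction on $l$, selecting a maximum-degree vertex $v$, applying the induction hypothesis to the $K_{l-1}$-free neighborhood $G[N(v)]$, and joining the resulting complete $(l-2)$-partite graph completely to $V\setminus N(v)$, with the same part-by-part degree comparison. Your explicit handling of the equality clause is in fact slightly more careful than the paper's, but the underlying argument is identical.
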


\begin{proof}

  We proceed via induction on $l$. Our basic case is $l=3$, which has been proved in Lemma \ref{lem:K3}. Suppose the conclusion holds for $l-1$, then we consider the $K_{l}$-free graph $G=(V,E)$. For ease, denote $d_{i}:=d(v_{i})$ as the degree of $v_{i}$ in $G$, and without loss of generality, we assume that $w(v_{1})\geqslant w(v_{2})\geqslant \cdots\geqslant w(v_{n})$. By Proposition \ref{pro:degree}, we have $d_{1}\geqslant d_{2}\geqslant\cdots\geqslant d_{n}$. Let $N(v_{1})$ be the set of neighbors of $v_{1}$ in $G$, and $G[N(v_{1})]$ be the induced subgraph of $G$ with vertex set $N(v_{1})$. Then $|N(v_{1})|=d_{1}$, and $G[N(v_{1})]$ is $K_{l-1}$-free, otherwise there would be a copy of $K_{l}$ in $G$. So by the induction hypothesis, there exists a complete $(l-2)$-partite graph $M$, with vertex set $N(v_{1})$, such that for any $v\in N(v_{1})$, the degree of $v$ in $M$ is larger than or equal to the degree of $v$ in $G[N(v_{1})]$, i.e., $\forall v_{i}\in N(v_{1}),$
   \begin{equation*}
      d_{M}(v_{i})\geqslant d_{G[N(v_{1})]}(v_{i}),
   \end{equation*}
   with equality to hold only when $G[N(v_{1})]$ is a complete $(l-2)$-partite graph. Then we construct a new graph $\tilde{G}=(V,\tilde{E})$, the edge set consists of two parts:
  \begin{equation*}
    \tilde{E}=\{(v,u)|v\in V\setminus N(v_{1}),\ u\in N(v_{1})\}\cup E(M).
  \end{equation*}
  For ease, denote $U=V\setminus(\{v_{1}\}\cup N(v_{1}))$, we then compare $\tilde{G}=(V,\tilde{E})$ with $G=(V,E)$ as follows.
  \begin{enumerate}
    \item $d_{\tilde{G}}(v_{1})=d_{G}(v_{1})=d_{1}.$
    \item $\forall v_{i}\in U$, $d_{\tilde{G}}(v_{i})=d_{\tilde{G}}(v_{1})=d_{1}\geqslant d_{G}(v_{i}).$
    \item $\forall v_{i}\in N(v_{1})$, $d_{\tilde{G}}(v_{i})=d_{M}(v_{i})+|U|+1 \geqslant d_{G[N(v_{1})]}(v_{i})+|U|+1 \geqslant d_{G}(v_{i}).$
  \end{enumerate}
  Hence $\tilde{G}=(V,\tilde{E})$ is a complete $(l-1)$-partite graph such that for every vertex $v_{i}$, the degree of $v_{i}$ in $\tilde{G}$ is larger than or equal to the degree of $v_{i}$ in $G$, with equalities to hold for all $1\leqslant i\leqslant n$ when $G[N(v_{1})]$ is a complete $(l-2)$-partite graph, and all vertices in $U$ are adjacent to all vertices in $N(v_{1})$. Note that this is equivalent to saying that $G$ is a complete $(l-1)$-partite graph, finishing the proof.
\end{proof}

Using Lemma \ref{lem:Kl}, we state that if all of vertex weights are strictly positive and the graph does not contain any copies of $K_{l}$ with $l\geqslant 3$, then the extremal graph must be a complete $(l-1)$-partite graph.
\begin{theorem}\label{thm:Kl}
  The sum-edge-weight extremal number $ex(n,w_{+},K_{l})$ is equal to
  \begin{equation*}
    \max\limits_{\mathcal{P}}\sum\limits_{P\in\mathcal{P}}(n-|P|)w(P),
  \end{equation*}
  where the maximum is taken over all partitions $\mathcal{P}$ of $V(K_{n})$ into $l-1$ parts, $|P|$ is the number of vertices in $P$, and $w(P)=\sum\limits_{v\in P}w(v)$.
\end{theorem}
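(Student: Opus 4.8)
The plan is to read the theorem off Lemma \ref{lem:Kl} together with the identity $w_{+}(G)=\sum_{i=1}^{n}d(v_{i})w(v_{i})$ recorded in Section \ref{sec:SEWTuran}. First I would dispatch the lower bound. Given any partition $\mathcal{P}=\{P_{1},\dots,P_{l-1}\}$ of $V(K_{n})$ into $l-1$ parts, let $G_{\mathcal{P}}$ be the complete $(l-1)$-partite graph with these parts. Among any $l$ vertices two lie in a common $P_{j}$ by pigeonhole, hence form a non-edge, so $G_{\mathcal{P}}$ is $K_{l}$-free and is admissible in the maximum defining $ex(n,w_{+},K_{l})$. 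Every $v\in P_{j}$ has degree $n-|P_{j}|$ in $G_{\mathcal{P}}$, so $w_{+}(G_{\mathcal{P}})=\sum_{i}d(v_{i})w(v_{i})=\sum_{j}(n-|P_{j}|)w(P_{j})$. Taking the maximum over $\mathcal{P}$ gives $ex(n,w_{+},K_{l})\geqslant\max_{\mathcal{P}}\sum_{P\in\mathcal{P}}(n-|P|)w(P)$.

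For the upper bound, let $F$ be an extremal $K_{l}$-free graph on $n$ vertices. By Lemma \ref{lem:Kl} there is a complete $(l-1)$-partite graph $\tilde{F}$ on the same vertex set with $d_{\tilde{F}}(v_{i})\geqslant d_{F}(v_{i})$ for every $i$. Since all vertex weights are nonnegative,
\[
w_{+}(F)=\sum_{i=1}^{n}d_{F}(v_{i})w(v_{i})\leqslant\sum_{i=1}^{n}d_{\tilde{F}}(v_{i})w(v_{i})=w_{+}(\tilde{F}).
\]
Letting $\mathcal{P}$ be the vertex partition of $\tilde{F}$ into $l-1$ parts and applying the degree computation from the previous paragraph, $w_{+}(\tilde{F})=\sum_{P\in\mathcal{P}}(n-|P|)w(P)\leqslant\max_{\mathcal{P}}\sum_{P\in\mathcal{P}}(n-|P|)w(P)$. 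Combining the two displayed inequalities gives equality, which is the assertion of the theorem.

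Once Lemma \ref{lem:Kl} is available the argument is essentially bookkeeping, so I do not anticipate a real obstacle; the only points requiring care are the harmless conventions on partitions (empty parts are permitted and contribute $0$ to the sum, so "exactly $l-1$ parts" and "at most $l-1$ parts" yield the same maximum) and the observation that $\tilde{F}$ is itself $K_{l}$-free, so the inequality $w_{+}(F)\leqslant w_{+}(\tilde{F})$ genuinely forces the extremal value to be attained on a complete $(l-1)$-partite graph. If one additionally wants the structural statement advertised before the theorem, namely that an extremal graph is complete $(l-1)$-partite when all weights are strictly positive, one invokes the equality case of Lemma \ref{lem:Kl}: with strictly positive weights, $w_{+}(F)=w_{+}(\tilde{F})$ forces $d_{F}(v_{i})=d_{\tilde{F}}(v_{i})$ for all $i$, which by the lemma occurs only when $F$ is already complete $(l-1)$-partite.
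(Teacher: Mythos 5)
Your proposal is correct and follows essentially the same route as the paper: both rest on Lemma \ref{lem:Kl} and the identity $w_{+}(G)=\sum_{i}d(v_{i})w(v_{i})$ to replace an extremal $K_{l}$-free graph by a complete $(l-1)$-partite graph with pointwise no smaller degrees. If anything, your version is slightly more complete, since you verify the lower bound explicitly and use a non-strict inequality that establishes the numerical equality under merely nonnegative weights, whereas the paper's contradiction argument implicitly leans on strict positivity.
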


\begin{proof}
  Suppose $G=(V,E)$ is the extremal graph with no copies of $K_{l},$ and $G$ is not a complete $(l-1)$-partite graph, by Lemma \ref{lem:Kl}, we can always find a complete $(l-1)$-partite graph $\tilde{G}=(V,\tilde{E})$ such that for every vertex $v_{i}\in V$,
  \begin{equation*}
    d_{\tilde{G}}(v_{i})\geqslant d_{G}(v_{i}),
  \end{equation*}
 and the equalities cannot hold for all vertices, hence we obtain that
 \begin{equation*}
   w_{+}(\tilde{G})=\sum_{i=1}^{n}d_{\tilde{G}}(v_{i})w(v_{i})> w_{+}(G)=\sum_{i=1}^{n}d_{G}(v_{i})w(v_{i}),
 \end{equation*}
 which is a contradiction to the assumption that $G$ is the extremal graph.
\end{proof}

Note that $\tilde{G}$ does not have to be the extremal graph, and how to partition $n$ vertices into $l-1$ parts depends on the vertex weight function.

Now we review and compare our results with the similar results in \cite{2018BennettProduct},  where the authors showed that under product-weight-edge function, the extremal $K_{l}$-free graph must be a complete $(l-1)$-partite graph.

\begin{proposition}\textup{\cite{2018BennettProduct}}\label{prop:ProductKl}
  The extremal number $ex(n,w_{\pi},K_{l})$ is equal to
  \begin{equation*}
    \max_{\mathcal{P}}\sum_{\substack{ P,P'\in\mathcal{P} \\ P\neq P'}}w(P)w(P'),
  \end{equation*}
  where $w(P)=\sum\limits_{v\in P}w(v)$ and the maximum is taken over all partitions $\mathcal{P}$ of $V(K_{n})$ into $l-1$ parts.
\end{proposition}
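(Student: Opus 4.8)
The plan is to run the same strategy as in the proof of Theorem~\ref{thm:Kl}: first show that an extremal $K_{l}$-free graph under $w_{\pi}$ may be taken to be complete $(l-1)$-partite, and then evaluate $w_{\pi}$ on such graphs. The only adjustment is that while the sum-weight of $G$ has the clean form $\sum_{i}d(v_{i})w(v_{i})$, for product weights the natural quantity attached to a vertex $v$ is its \emph{weighted degree} $s(v):=\sum_{u\in N(v)}w(u)$, since $w_{\pi}(G)=\tfrac12\sum_{v}w(v)\,s(v)$. One option is therefore to re-run the construction of Lemma~\ref{lem:Kl}, but letting a vertex of \emph{maximum weighted degree} play the role of $v_{1}$ at each step; a short check then shows the resulting complete $(l-1)$-partite graph $\tilde G$ satisfies $s_{\tilde G}(v_{i})\geqslant s_{G}(v_{i})$ for every $i$, whence $w_{\pi}(\tilde G)\geqslant w_{\pi}(G)$. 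A cleaner route, which I will sketch, is a Zykov-type symmetrization applied directly to an extremal graph.

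Let $F$ be an extremal $K_{l}$-free graph and assume first that all weights are strictly positive. I claim $F$ contains no \emph{bad triple}, i.e.\ no three distinct vertices $u,v,x$ with $u\not\sim v$, $v\not\sim x$ and $u\sim x$. Suppose one exists. If $s(v)\geqslant s(u)$ and $s(v)\geqslant s(x)$, form $F'$ from $F$ by replacing the neighborhoods of $u$ and of $x$ each with a copy of $N(v)$, keeping $u,v,x$ pairwise non-adjacent. Since $u,v,x$ are pairwise non-adjacent in $F'$, any $K_{l}$ of $F'$ uses at most one of them, and we obtain a copy of $K_{l}$ in $F$ by substituting $v$ for that vertex if necessary; this is impossible, so $F'$ is $K_{l}$-free. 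Counting the edges incident to $u$ or $x$ gives
\[
w_{\pi}(F')-w_{\pi}(F)=w(u)\bigl(s(v)-s(u)\bigr)+w(x)\bigl(s(v)-s(x)\bigr)+w(u)w(x)>0,
\]
the extra $w(u)w(x)$ arising from the destroyed edge $ux$; this contradicts extremality of $F$. Otherwise $\max\bigl(s(u),s(x)\bigr)>s(v)$, say $s(u)>s(v)$, and we instead form $F'$ by replacing $N(v)$ with a copy of $N(u)$ (still keeping $u\not\sim v$); as before $F'$ is $K_{l}$-free, and $w_{\pi}(F')-w_{\pi}(F)=w(v)\bigl(s(u)-s(v)\bigr)>0$, again a contradiction. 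Hence $F$ has no bad triple, so the complement of $F$ is a disjoint union of cliques, i.e.\ $F$ is complete multipartite.

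Finally, a complete multipartite $K_{l}$-free graph has at most $l-1$ parts, so (allowing empty parts) we may regard $F$ as complete $(l-1)$-partite with parts $P_{1},\dots,P_{l-1}$; then $w_{\pi}(F)=\sum_{1\leqslant i<j\leqslant l-1}w(P_{i})w(P_{j})$, which is precisely the quantity maximized in the statement, and the reverse inequality is immediate since every complete $(l-1)$-partite graph is $K_{l}$-free. Vertices of weight $0$ contribute nothing to $w_{\pi}$, so the general case follows by deleting them, applying the positive-weight case, and reinserting them into arbitrary parts. I expect the genuine work to be the symmetrization step: one must choose the move --- a double clone onto $v$ versus a single clone of $v$ --- according to which of $s(u),s(v),s(x)$ is largest, verify in each case that no $K_{l}$ is created, and check that $w_{\pi}$ \emph{strictly} increases, the last point relying on positivity of the weights together with the bonus $w(u)w(x)$ gained from the deleted edge $ux$ in the first case.
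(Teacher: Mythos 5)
Your argument is correct. Note first that the paper does not prove this proposition at all: it is imported verbatim from Bennett et al.\ \cite{2018BennettProduct}, so the only in-paper object to compare against is the parallel sum-weight result (Lemma \ref{lem:Kl} and Theorem \ref{thm:Kl}). There the authors use the Erd\H{o}s-style neighborhood induction: pick the vertex $v_{1}$ of largest weight (hence, by Proposition \ref{pro:degree}, largest degree), apply the induction hypothesis to the $K_{l-1}$-free graph $G[N(v_{1})]$, and join $V\setminus N(v_{1})$ completely to $N(v_{1})$, checking that every ordinary degree weakly increases. You instead run a Zykov-type symmetrization on the weighted degrees $s(v)=\sum_{u\in N(v)}w(u)$, eliminating bad triples to conclude the extremal graph is complete multipartite. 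Both routes are sound; your computation of $w_{\pi}(F')-w_{\pi}(F)$ in each case is right (including the bonus $w(u)w(x)$ from the destroyed edge $ux$, which is what makes the first case strict even when $s(u)=s(v)=s(x)$), the $K_{l}$-freeness of the modified graphs is correctly justified by substituting $v$ (resp.\ $u$) into any putative clique, and the reduction to strictly positive weights is harmless since zero-weight vertices contribute nothing to $w_{\pi}$. The symmetrization is arguably the more natural choice here, precisely because the degree-monotonicity device of Proposition \ref{pro:degree} that powers the paper's sum-weight induction has no direct product-weight analogue, whereas cloning moves interact cleanly with the identity $w_{\pi}(G)=\tfrac{1}{2}\sum_{v}w(v)s(v)$; the cost is that you only obtain the structure of extremal graphs, while the paper's Lemma \ref{lem:Kl} gives the slightly stronger degree-domination statement for \emph{every} $K_{l}$-free graph. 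One cosmetic point: the sum in the statement ranges over ordered pairs $P\neq P'$, so it equals $2\sum_{i<j}w(P_{i})w(P_{j})$; you should either match that normalization or note the factor of $2$ explicitly.
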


It seems that the results of both cases are the same, that is, both of the extremal $K_{l}$-free graphs are complete $(l-1)$-partite graphs. It is natural to ask whether the structures of these two graphs are exactly the same. In the following we show a simple example and give a negative answer to this question.

\begin{example}\label{exam:K3}
  We set $n=6$, and let $w(v_{1},v_{2},v_{3},v_{4},v_{5},v_{6})=(41,33,29,13,11,7)$. The extremal triangle-free graphs of two vertex-induced functions are shown in Figure \ref{fig:sum} and Figure \ref{fig:product} respectively.

  \begin{minipage}{0.4\textwidth}
\begin{center}
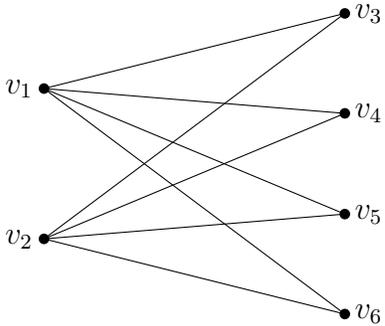

\begin{tikzpicture}[trim left, trim right=2cm, baseline]
\fill (-2,1) circle (2pt);
\node (node1)[left] at (-2,1)  {$v_{1}$};
\fill (-2,-1) circle (2pt);
\node (node2)[left] at (-2,-1) {$v_{2}$};
\fill (2,2) circle (2pt);
\node (node3)[left] at (2.65,2)  {$v_{3}$};
\fill (2,0.67) circle (2pt);
\node (node4)[left] at (2.65,0.67)  {$v_{4}$};
\fill (2,-0.67) circle (2pt);
\node (node5)[left] at (2.65,-0.67) {$v_{5}$};
\fill (2,-2) circle (2pt);
\node (node5)[left] at (2.65,-2) {$v_{6}$};

\draw (-2,1) -- (2,2);
\draw (-2,1) -- (2,0.67);
\draw (-2,1) -- (2,-0.67);
\draw (-2,1) -- (2,-2);
\draw (-2,-1) -- (2,2);
\draw (-2,-1) -- (2,0.67);
\draw (-2,-1) -- (2,-0.67);
\draw (-2,-1) -- (2,-2);
\end{tikzpicture}
\captionsetup{font=footnotesize}
\captionof{figure}{Sum-edge-weight}\label{fig:sum}
\end{center}
\end{minipage}
\begin{minipage}{0.6\textwidth}
\begin{center}
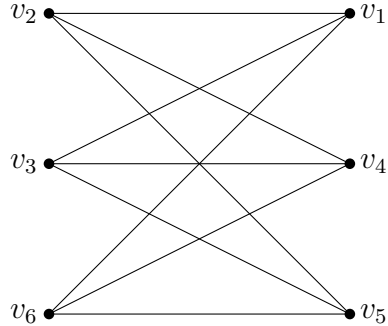

\begin{tikzpicture}[trim left, trim right=2cm, baseline]
\fill (-2,2) circle (2pt);
\node (node1)[left] at (-2,2)  {$v_{2}$};
\fill (-2,0) circle (2pt);
\node (node2)[left] at (-2,0) {$v_{3}$};
\fill (-2,-2) circle (2pt);
\node (node3)[left] at (-2,-2)  {$v_{6}$};
\fill (2,2) circle (2pt);
\node (node4)[left] at (2.65,2)  {$v_{1}$};
\fill (2,0) circle (2pt);
\node (node5)[left] at (2.65,0) {$v_{4}$};
\fill (2,-2) circle (2pt);
\node (node5)[left] at (2.65,-2) {$v_{5}$};

\draw (-2,2) -- (2,2);
\draw (-2,2) -- (2,0);
\draw (-2,2) -- (2,-2);
\draw (-2,0) -- (2,2);
\draw (-2,0) -- (2,0);
\draw (-2,0) -- (2,-2);
\draw (-2,-2) -- (2,2);
\draw (-2,-2) -- (2,0);
\draw (-2,-2) -- (2,-2);
\end{tikzpicture}
\captionsetup{font=footnotesize}
\captionof{figure}{Product-edge-weight}\label{fig:product}
\end{center}
\end{minipage}

\end{example}

\begin{remark}
  In sum-edge-weight case, Proposition \ref{pro:degree} shows that the vertices with larger weight should be adjacent to more vertices.  As we can see in Figure \ref{fig:sum}, vertices with larger weight are put in the left part which contains fewer vertices than those in the right part. While in the product-edge-weight case, for given vertex weight function, the sum of weight of all vertices is fixed, then by Cauchy-Schwarz inequality, the strategy is to minimize the weight difference between each part. In Example \ref{exam:K3}, the total weight of $6$ vertices is $134$, and the minimum value of the difference is $4$, then we obtain the extremal graph as shown in Figure \ref{fig:product}.
\end{remark}

The following theorem tells us that for every $K_{l+1}$-free graph $G$, we can remove some edges with bounded total weight to make the graph $G$ $l$-partite.

\begin{theorem}
  Let $G$ be a $K_{l+1}$-free graph, such that $w_{+}(G)=ex(n,w_{+},K_{l+1})-t$. Then we can make the graph $G$ $l$-partite by removing a set of edges $E_{t}$ with $w_{+}(E_{t})\leqslant t$.
\end{theorem}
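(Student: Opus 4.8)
The plan is to prove this by induction on $l$, following the classical stability argument for \turan's theorem and feeding Theorem~\ref{thm:Kl} into each step. Write $ex := ex(n,w_+,K_{l+1})$. It is convenient to restate the goal: it suffices to produce an $l$-partition $\mathcal P$ of $V(G)$ with
\[
 w_+\bigl(\mathrm{in}_G(\mathcal P)\bigr) + w_+(G)\ \le\ ex,
\]
where $\mathrm{in}_G(\mathcal P)$ denotes the set of edges of $G$ with both endpoints in a common part of $\mathcal P$; one then takes $E_t := \mathrm{in}_G(\mathcal P)$, since $G\setminus E_t$ is $l$-partite and the displayed inequality rearranges to $w_+(E_t)\le ex - w_+(G) = t$. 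The base case $l=1$ is vacuous ($K_2$-free means $G$ has no edges, and $t\ge 0$).

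For the inductive step assume the statement for $l-1$. Pick a vertex $v_1$ of maximum degree in $G$, put $A := N_G(v_1)$ and $B := V(G)\setminus A$, so that $v_1\in B$, the graph $G[A]$ is $K_l$-free, and $v_1$ is isolated in $G[B]$. Apply the induction hypothesis to $G[A]$ to obtain an $(l-1)$-partition $\mathcal Q$ of $A$ with $w_+(\mathrm{in}_{G[A]}(\mathcal Q)) + w_+(G[A]) \le ex(|A|,w_+,K_l)$ (the \turan quantity of Theorem~\ref{thm:Kl} computed with the weights restricted to $A$), and let $\mathcal P$ be $\mathcal Q$ together with one further part equal to $B$. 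Splitting the edges of $G$ into those inside $A$, those between $A$ and $B$, and those inside $B$, one gets
\[
 w_+\bigl(\mathrm{in}_G(\mathcal P)\bigr) + w_+(G)\ =\ \bigl[w_+(\mathrm{in}_{G[A]}(\mathcal Q)) + w_+(G[A])\bigr] + w_+\bigl(E_G(A,B)\bigr) + 2\,w_+\bigl(G[B]\bigr).
\]
On the other hand, using in Theorem~\ref{thm:Kl} the partition of $V(G)$ consisting of an optimal $(l-1)$-partition of $A$ plus the part $B$ gives $ex \ge |A|\,w(B) + |B|\,w(A) + ex(|A|,w_+,K_l)$. Combining these two facts with the induction hypothesis, the whole inductive step reduces to the \emph{key inequality}
\[
 w_+\bigl(E_G(A,B)\bigr) + 2\,w_+\bigl(G[B]\bigr)\ \le\ |A|\,w(B) + |B|\,w(A).
\]
Since $|A|\,w(B)+|B|\,w(A)$ is exactly the total weight of all $A$--$B$ pairs, and since $v_1\sim A$ while $v_1\not\sim B\setminus\{v_1\}$ gives $w_+(G[B]) = w_+(G[U])$ and $w_+(\overline E_G(A,B)) = w_+(\overline E_G(A,U))$ for $U := B\setminus\{v_1\}$ (here $\overline E_G$ is the non-edge set), the key inequality is equivalent to
\[
 2\,w_+\bigl(G[U]\bigr)\ \le\ w_+\bigl(\overline E_G(A,U)\bigr).
\]

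I expect this last inequality to be the main obstacle, and it is the heart of the argument. The maximum-degree choice of $v_1$ gives, for each $u\in U$, that $d_G(u)\le d_G(v_1) = |A|$ and $N_G(u)\subseteq A\cup U$, hence $u$ has at least $d_U(u)$ non-neighbours in $A$; summing the $U$-endpoint contributions of these non-edges already yields $w_+(\overline E_G(A,U)) \ge w_+(G[U])$. The delicate point is extracting the remaining factor of $2$, for which one must in addition use the weight sitting on the $A$-endpoints of the $A$--$U$ non-edges, i.e.\ also bound $w_+(G[U]) \le \sum_{a\in A}\overline d_U(a)\,w(a)$. Small triangle-free examples (such as $C_5$ with a heavy ``inner'' edge) show that a bare maximum-degree vertex need not suffice, so $v_1$ must be chosen with the weights in mind as well --- e.g.\ by breaking ties among maximum-degree vertices in favour of a vertex of largest weighted degree $\sum_{u\sim v_1}(w(u)+w(v_1))$, or by first applying the weight-swapping normalization behind Proposition~\ref{pro:degree}. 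Once the key inequality is in place the induction closes at once, and unwinding it produces the partition $\mathcal P$ and hence the removable edge set $E_t$ of total weight at most $t$.
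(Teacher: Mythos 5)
Your setup is correct and, in fact, coincides with the paper's own argument: the paper runs the greedy ``peel off the non-neighbourhood of a maximum-degree vertex'' algorithm, which is exactly your induction unrolled, and your bound $ex \geqslant |A|\,w(B)+|B|\,w(A)+ex(|A|,w_+,K_l)$ is its comparison with the complete multipartite graph $H_p$. The problem is that the proof never closes: everything reduces to the key inequality $2\,w_+(G[U])\leqslant w_+\bigl(\overline{E}_G(A,U)\bigr)$, and you do not prove it. The maximum-degree choice of $v_1$ only yields $\overline{d}_A(u)\geqslant d_U(u)$ for $u\in U$, hence $\sum_{u\in U}\overline{d}_A(u)w(u)\geqslant w_+(G[U])$, i.e.\ one of the two needed copies of $w_+(G[U])$; the second copy would have to come from $\sum_{a\in A}\overline{d}_U(a)w(a)$, and nothing forces that sum to be large. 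Your own example already kills the inequality as stated: take $C_5=v_1x_1u_1u_2x_2v_1$ with $w(u_1)=w(u_2)=1$ and all other weights $\epsilon$; all degrees equal $2$, so $v_1$ is an admissible pivot (and Proposition~\ref{pro:degree} imposes no constraint), $A=\{x_1,x_2\}$, $U=\{u_1,u_2\}$, and one gets $2\,w_+(G[U])=4$ while $w_+(\overline{E}_G(A,U))=2+2\epsilon$. So ``once the key inequality is in place the induction closes at once'' is true but vacuous: you must exhibit a pivot rule for which the inequality actually holds, and the tie-breaking heuristics you gesture at (largest weighted degree, or the normalization behind Proposition~\ref{pro:degree}) are not shown to work. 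The theorem itself is not contradicted by the $C_5$ example --- there $t=2$ and a $2\epsilon$-edge can be deleted --- but your argument, as written, does not prove it.

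It is worth noting that you have isolated a genuine difficulty rather than merely omitted a routine step. The paper's proof passes over the very same point by asserting the identity $2\sum_{e\in E(V_{i})}w_{+}(e)+\sum_{e\in E(V_{i}\times X_{i+1})}w_{+}(e)=\sum_{v\in V_{i}}d_{X_{i}}(v)w(v)+\sum_{u\in X_{i+1}}d_{V_{i}}(u)w(u)$; but in the weighted setting $\sum_{v\in V_i}d_{V_i}(v)w(v)$ equals $\sum_{e\in E(V_i)}w_+(e)$ once, not twice (the factor $2$ in the unweighted degree-sum formula does not survive when each endpoint contributes its own weight), so the coefficient $2$ there is precisely the unproved half of your key inequality. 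Either a weight-aware pivot rule with a proof that it satisfies the key inequality, or a different global argument, is needed to complete the proof.
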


\begin{proof}
By Proposition~\ref{pro:degree}, we may assume $d(v)\leqslant d(u)$ if $w(v)\leqslant w(u)$ in $G$. Otherwise we can map $G$ to $G'$ such that $G'$ is isomorphic to $G$ and satisfies the above degree conditions, we still have $ex(n,w_+,K_{l+1})-w_{+}(G')\leqslant t$.

Let $v_{1},v_{2},\ldots,v_{n}$ be an arbitrary linear order of $V(G)$, and for every subset $X\subseteq V(G)$, denote $d_{X}(v)=|N_{X}(v)|$ as the number of neighbors of $v$ in $X$. We now consider the following algorithm on $G$.

In the first step, let $X_{1}:=V(G)$ and we pick $v_{1}\in X_{1}$ such that $d_{X_{1}}(v_{1})=\max\limits_{u\in V(G)} d_{X_{1}}(u)$ and $v_1$ has the smallest index in the linear order. Let $V_{1}=X_{1}\setminus N(v_{1})$. In step $i$, we pick $v_{i}\in X_{i}:=V(G)\setminus (\bigcup\limits_{j=1}^{i-1}V_{j})$, such that $d_{X_{i}}(v_{i})=\max\limits_{u\in V(G)} d_{X_{i}}(u)$, and $v_{i}$ has the smallest index, and we let $V_{i}=X_i\setminus N_{X_i}(v_i)$. The algorithm stops in step $p$ when $X_{p+1}=\emptyset$. Since $G$ is $K_{l+1}$-free, we have $p\leqslant l$.

For step $i$ of the algorithm, we have
\begin{align*}
   & 2\sum_{e\in E(V_{i})}w_{+}(e)+\sum_{e\in E(V_{i}\times X_{i+1})}w_{+}(e) \\
   =& \sum_{v\in V_{i}}d_{X_{i}}(v)w(v)+\sum_{u\in X_{i+1}}d_{V_{i}}(u)w(u) \\
\leqslant& |X_{i+1}|w(V_{i})+|V_{i}|w(X_{i+1}),
\end{align*}
where $w(X)=\sum\limits_{x\in X}w(x)$. Let $H_{p}$ be a complete $p$-partite graph, with vertex partition $V_{1},V_{2},\dots,V_{p}$. Then we have
\begin{align*}
   &\sum_{e\in E(G)}w_{+}(e)+\sum_{i=1}^p \sum_{e\in E(V_i)} w_{+}(e) \\
   =&\sum_{i=1}^p\Big(2\sum_{e\in E(V_i)}w_{+}(e)+\sum_{e\in E(V_i\times X_{i+1})}w_{+}(e)\big)\\
   \leqslant &\sum_{i=1}^p \Big(|V_i|w(X_{i+1})+|X_{i+1}|w(V_i)\Big)\\
   =&\sum_{e\in E(H_{p})}w_{+}(e)\leqslant ex(n,w_+,K_{l+1}),
   \end{align*}
which gives us $\sum\limits_{i=1}^{p} \sum\limits_{e\in E(V_i)} w_{+}(e)\leqslant t$. Since we can remove the edges set $E_{t}=\bigcup\limits_{i=1}^{p}E(V_{i})$ to make the graph $G$ $l$-partite, the proof is finished.
\end{proof}

\section{Weighted \erdos-Stone type theorem}\label{sec:weightedESS}
In Theorem \ref{thm:Kl}, we consider the problem of determining the maximum weight and the structure of extremal graph without a clique of given size. We have proved that for given vertex function, if all of the vertex weights are strictly
positive and the graph does not contain any copies of $K_{l}$, then the extremal graph must be complete
$(l-1)$-partite. Now we will generalize this result by forbidding an arbitrary subgraph, our result can be viewed as a weighted version of \erdos-Stone theorem.\\

We recall \erdos-Stone theorem and let $\chi(H)$ denote the chromatic number of a graph $H$, the minimum number of colors needed to color $V(H)$ properly.

\begin{theorem}[\erdos-Stone Theorem]\label{thm:ESS}
  Fix a graph $H$, as $n \rightarrow \infty$, we have
  \begin{equation*}
    ex(n,H)=\bigg(1-\frac{1}{\chi(H)-1}+o(1)\bigg)\binom{n}{2}.
  \end{equation*}
\end{theorem}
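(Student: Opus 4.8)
This is the classical \erdos--Stone theorem, and the plan is to prove it in the customary two halves: an easy lower bound from an explicit construction, and an upper bound that carries all the content and that uses exactly the machinery (the Szemer\'edi regularity lemma together with a counting/embedding lemma) that this section announces it will employ.

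For the lower bound I would put $r=\chi(H)-1$ and take $G$ to be the Tur\'an graph $T_r(n)$, the complete $r$-partite graph on $n$ vertices with parts of sizes $\lfloor n/r\rfloor$ or $\lceil n/r\rceil$. Since $T_r(n)$ is properly $r$-colourable, every subgraph of it has chromatic number at most $r<\chi(H)$, so $T_r(n)$ is $H$-free; and a direct count gives $e(T_r(n))=\bigl(1-\tfrac1r\bigr)\binom{n}{2}-O(n)$. Hence $ex(n,H)\geqslant\bigl(1-\tfrac1{\chi(H)-1}-o(1)\bigr)\binom{n}{2}$.

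For the upper bound, fix $\eta>0$ and suppose $G$ has $n$ vertices with $e(G)\geqslant\bigl(1-\tfrac1{\chi(H)-1}+\eta\bigr)\binom{n}{2}$; the goal is to show $H\subseteq G$ once $n$ is large. I would first fix a small density threshold $d=d(\eta)$ and then a regularity parameter $\epsilon=\epsilon(d,H)$ small enough for the embedding lemma, apply the regularity lemma to obtain an $\epsilon$-regular partition $V_0\cup V_1\cup\dots\cup V_k$ with $1/\epsilon\leqslant k\leqslant M(\epsilon)$, and form the reduced (cluster) graph $R$ on $[k]$ with an edge $ij$ exactly when $(V_i,V_j)$ is $\epsilon$-regular of density at least $d$. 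The usual clean-up estimate bounds the $G$-edges not recorded by $R$ — those inside clusters, incident to $V_0$, across irregular pairs, or across sparse pairs — by a quantity of order $\bigl(\tfrac{|V_0|}{n}+\tfrac1k+\epsilon+2d\bigr)n^2<\eta\binom{n}{2}$, so $R$ has edge density above $1-\tfrac1{\chi(H)-1}$. Tur\'an's theorem then yields a copy of $K_{\chi(H)}$ in $R$, on clusters $V_{i_1},\dots,V_{i_{\chi(H)}}$, any two of which form an $\epsilon$-regular pair of density at least $d$. Since $H$ is $\chi(H)$-colourable, it is a subgraph of the complete $\chi(H)$-partite graph with every part of size $|V(H)|$, and the counting/embedding (``key'') lemma embeds that blow-up into $V_{i_1},\dots,V_{i_{\chi(H)}}$ as soon as each cluster has size $\gg|V(H)|$, which holds for $n\geqslant n_0(\eta,H)$. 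Thus $H\subseteq G$, so $ex(n,H)\leqslant\bigl(1-\tfrac1{\chi(H)-1}+\eta\bigr)\binom{n}{2}$ for all large $n$; letting $\eta\to0$ completes the proof.

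I expect the only real obstacle to be ordering the quantifiers and bookkeeping the constants so that every inequality lines up: one must fix $d$ as a function of $\eta$ and $\chi(H)$ first, then choose $\epsilon$ small enough (in terms of $d$ and $|V(H)|$) for the embedding lemma, accept the resulting cluster bound $M(\epsilon)$, and only afterwards pick $n_0$ so large that each kept cluster has size far exceeding $|V(H)|$ and all lower-order terms ($|V_0|$, the $n/k$ waste, the $O(n)$ in the Tur\'an count) are dominated by $\eta n^2$. No single step is deep once the regularity lemma and the embedding lemma are treated as black boxes — this is the ``standard argument'' referred to in the introduction — and it is precisely this template that will be reused, with the edge-weight function $w_{+}$ (or $w_{\pi}$) playing the role of counting measure in the clean-up and embedding steps, in the weighted \erdos--Stone theorems that follow.
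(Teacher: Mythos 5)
The paper does not actually prove this statement: it is recalled verbatim as the classical \erdos--Stone theorem of \cite{1946ErodsBAMS}, and only the weighted analogue (Theorem \ref{thm:weightedESS}) is proved. Your sketch --- Tur\'an graph $T_{\chi(H)-1}(n)$ for the lower bound, and regularity lemma, clean-up, Tur\'an's theorem on the reduced graph, and the embedding lemma (Lemma \ref{lemma:Counting}) for the upper bound --- is the correct standard argument and follows essentially the same template the paper itself uses to prove the weighted version, so there is nothing to object to beyond the constant-bookkeeping you already flag.
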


While in the weighted version, if all of vertex weights are strictly positive, we will obtain an asymptotic result of $ex(n,w_{+},H)$ as follows.

\begin{theorem}\label{thm:weightedESS}
  Fix a graph $H$, as $n \rightarrow \infty$, we have
  \begin{equation*}
    ex(n,w_{+},H)=\max\limits_{\mathcal{R}}\sum\limits_{P\in\mathcal{R}}(n-|P|)w(P)+o(n^{2}),
  \end{equation*}
  where the maximum is taken over all the partitions $\mathcal{R}$ of $V(K_{n})$ into $\chi(H)-1$ parts.
\end{theorem}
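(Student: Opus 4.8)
The plan is to prove the identity by establishing matching bounds, both of which reduce to the exact clique theorem (Theorem~\ref{thm:Kl}). Throughout write $r=\chi(H)-1$, and assume the vertex weights are uniformly bounded by a constant $C$; this is what makes the error term $o(n^2)$ meaningful, and it is harmless because both sides of the claimed identity are homogeneous of degree one in $w$. The whole point is that, up to $o(n^2)$, forbidding $H$ is no stronger than forbidding $K_{\chi(H)}$, for which Theorem~\ref{thm:Kl} already gives the exact answer $\max_{\mathcal R}\sum_{P\in\mathcal R}(n-|P|)w(P)$ over partitions into $r$ parts.

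For the lower bound, I would take a complete $r$-partite graph $T$ on $V(K_{n})$ whose vertex partition $\mathcal R$ attains $\max_{\mathcal R}\sum_{P\in\mathcal R}(n-|P|)w(P)$; by Theorem~\ref{thm:Kl} this maximum equals $ex(n,w_{+},K_{r+1})$ and is realised by such a $T$. Since $T$ is $r$-colourable, every subgraph of $T$ is $r$-colourable, whereas $\chi(H)=r+1$; hence $H\not\subseteq T$, so $T$ is $H$-free and $ex(n,w_{+},H)\ge w_{+}(T)=\max_{\mathcal R}\sum_{P}(n-|P|)w(P)$.

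For the upper bound, fix $\varepsilon>0$ and let $G\subseteq K_{n}$ be $H$-free. I would apply the Szemer\'edi regularity lemma to the underlying unweighted graph $G$ with regularity parameter $\delta$ and density threshold $d$ (both small, to be fixed at the end), obtaining an equitable partition $V_{0},V_{1},\dots,V_{k}$ with $1/\delta\le k\le M(\delta)$, and form the reduced graph $R$ on $\{V_{1},\dots,V_{k}\}$ with $V_{i}V_{j}\in E(R)$ when the pair is $\delta$-regular of density at least $d$. The structural heart of the argument is that $R$ must be $K_{r+1}$-free: if $R$ contained $K_{r+1}$, then, since $H$ embeds into the complete $(r+1)$-partite graph with parts of size $|V(H)|$, the counting (embedding) lemma would produce a copy of $H$ in $G$ once $\delta$ is small enough in terms of $d$ and $|V(H)|$ and $n$ is large, contradicting $H$-freeness. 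Next I would split $E(G)$ into edges meeting $V_{0}$, edges inside a single $V_{i}$, edges in irregular pairs, edges in regular pairs of density below $d$, and edges in the pairs that correspond to $E(R)$; routine counting shows the first four classes together contain $O((\delta+d)n^{2})$ edges, hence contribute $O((\delta+d)n^{2})$ to $w_{+}(G)$ once the bound $C$ is used. For a pair $(V_{i},V_{j})\in E(R)$ the weight of the edges of $G$ between $V_{i}$ and $V_{j}$ is at most $\sum_{u\in V_{i},v\in V_{j}}(w(u)+w(v))=|V_{j}|w(V_{i})+|V_{i}|w(V_{j})$, so $w_{+}(G)\le w_{+}(\widehat G)+O((\delta+d)n^{2})$, where $\widehat G$ is the blow-up of $R$ on the original vertex set (each cluster $V_{i}$ carrying its original vertices and weights, $V_{0}$ isolated, no edges inside clusters). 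A $K_{r+1}$ in $\widehat G$ would give a $K_{r+1}$ in $R$, so $\widehat G$ is $K_{r+1}$-free and Theorem~\ref{thm:Kl} yields $w_{+}(\widehat G)\le ex(n,w_{+},K_{r+1})=\max_{\mathcal R}\sum_{P}(n-|P|)w(P)$. Choosing $d$ small and then $\delta$ small enough (so that $\delta$ meets the requirement of the counting lemma and $O((\delta+d)n^{2})\le\varepsilon n^{2}$) gives $ex(n,w_{+},H)\le\max_{\mathcal R}\sum_{P}(n-|P|)w(P)+\varepsilon n^{2}$ for all large $n$; letting $\varepsilon\to 0$ and combining with the lower bound finishes the proof.

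The step I expect to be the main obstacle is the counting/embedding step together with the attendant quantifier management: one must choose $\varepsilon$, then $d$, then $\delta$ (small relative to $d$ and $|V(H)|$), apply regularity, and only afterwards let $n\to\infty$, while verifying that the accumulated error from all five edge classes is genuinely $o(n^{2})$ and that the weight bound $C$ is absorbed into the implied constants. A smaller but real point worth handling carefully is that Theorem~\ref{thm:Kl} may be invoked for $\widehat G$ even though its induced weights need not all be strictly positive, since the displayed formula for $ex(n,w_{+},K_{r+1})$ is a valid upper bound for the weight of \emph{every} $K_{r+1}$-free subgraph of $K_{n}$.
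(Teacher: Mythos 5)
Your proposal is correct and follows essentially the same route as the paper: apply the regularity lemma to an $H$-free graph, observe via the counting lemma that the reduced graph is $K_{\chi(H)}$-free, bound the weight of the blow-up using Theorem~\ref{thm:Kl}, and absorb the edges in the exceptional classes into the $o(n^{2})$ error term. Your explicit treatment of the lower bound, the bounded-weight hypothesis, and the separation of the density threshold $d$ from the regularity parameter $\delta$ are minor presentational refinements of the same argument.
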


Before we prove Theorem \ref{thm:weightedESS}, we introduce the famous Szemer\'{e}di regularity lemma.

\begin{lemma}[Szemer\'edi Regularity Lemma]\label{lem:psrl}
  For all $m,\epsilon >0$, there exists an integer $M$ such that the following holds. If a graph $G$ has $n\geqslant m$ vertices, there exists an integer $k$ satisfying $m \leqslant k \leqslant M$, and a partition $\mathcal{P}$ of $V(G)$ with $k+1$ parts $V_{0},V_{1},\ldots,V_{k},$ for every $1\leqslant i<j\leqslant k+1,$ $|V_{i}|=|V_{j}|$, and $|V_0|<\epsilon n$. For all but at most $\epsilon k^{2}$ pairs $(V_{i},V_{j})$ ($1\leqslant i<j\leqslant k+1$), for every subsets $A\subseteq V_{i}$ and $B\subseteq V_{j}$ with $|A|\geqslant \epsilon |V_{i}|$ and $|B|\geqslant \epsilon |V_{j}|,$ we have
  \begin{equation*}
   \left|\frac{e(A,B)}{|A|\cdot|B|}-\frac{e(V_{i},V_{j})}{|V_{i}|\cdot|V_{j}|}\right| < \epsilon,
  \end{equation*}
  where $e(A,B)$ is the number of edges having one end vertex in $A$ and one end vertex in $B$.
\end{lemma}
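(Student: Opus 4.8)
The statement is the classical Szemer\'edi Regularity Lemma, so I would follow the standard \emph{energy-increment} argument. The central object is the \emph{index} (or mean-square density) of a partition $\mathcal{P}=\{W_{1},\ldots,W_{s}\}$ of $V(G)$, defined by
\begin{equation*}
  q(\mathcal{P})=\sum_{1\leqslant i,j\leqslant s}\frac{|W_{i}|\,|W_{j}|}{n^{2}}\,d(W_{i},W_{j})^{2},
\end{equation*}
where $d(A,B)=e(A,B)/(|A|\,|B|)$ is the edge density between $A$ and $B$. Since every density lies in $[0,1]$ and the weights $|W_{i}|\,|W_{j}|/n^{2}$ sum to at most $1$, one has $0\leqslant q(\mathcal{P})\leqslant 1$ for every partition; this boundedness is what ultimately forces the procedure to halt.

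First I would establish two monotonicity facts. \emph{(i) Refinement never decreases the index}: if $\mathcal{Q}$ refines $\mathcal{P}$, then $q(\mathcal{Q})\geqslant q(\mathcal{P})$, which follows by applying the Cauchy--Schwarz inequality (equivalently, convexity of $t\mapsto t^{2}$) to the densities of the subpieces inside each pair $(W_{i},W_{j})$. \emph{(ii) A single irregular pair yields a gain}: if $(W_{i},W_{j})$ is not $\epsilon$-regular, witnessed by $A\subseteq W_{i}$, $B\subseteq W_{j}$ with $|A|\geqslant\epsilon|W_{i}|$, $|B|\geqslant\epsilon|W_{j}|$ and $|d(A,B)-d(W_{i},W_{j})|\geqslant\epsilon$, then refining $W_{i}$ by $\{A,W_{i}\setminus A\}$ and $W_{j}$ by $\{B,W_{j}\setminus B\}$ increases the contribution of this pair to the index by at least $\epsilon^{4}\,|W_{i}|\,|W_{j}|/n^{2}$. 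This is the \emph{defect form} of Cauchy--Schwarz, and it is the technical heart of the argument.

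With these in hand the iteration is routine. Starting from an equitable partition into $m$ parts (remainders absorbed into the exceptional set $V_{0}$), I would repeatedly test for $\epsilon$-regularity. If a partition into $k$ parts fails, then more than $\epsilon k^{2}$ pairs are irregular; for each such pair I split both parts along the witnessing subsets and pass to the common refinement $\mathcal{Q}$, in which every part is cut into at most $2^{k}$ pieces. Summing the per-pair gains from (ii) over the $\epsilon k^{2}$ irregular pairs and using (i) to discard the remaining pairs shows $q(\mathcal{Q})\geqslant q(\mathcal{P})+\epsilon^{5}$. Because $q$ starts in $[0,1]$ and climbs by at least $\epsilon^{5}$ at each non-regular stage, after at most $\epsilon^{-5}$ refinements the current partition must be $\epsilon$-regular. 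Tracking the part-count through these at most $\epsilon^{-5}$ steps — each multiplying it by $2^{k}$ — gives a bound $M=M(m,\epsilon)$ depending only on $m$ and $\epsilon$ (of tower type), as required. A final standard clean-up re-equalizes the part sizes and sweeps leftover vertices into $V_{0}$ with $|V_{0}|<\epsilon n$, changing the index and the regularity parameters only negligibly.

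The main obstacle is step (ii) together with its aggregation in the third paragraph: one must convert a density deviation on \emph{subsets} into a quantitative increase of the \emph{mean-square} density via the defect Cauchy--Schwarz inequality, and then sum these local gains over all irregular pairs while keeping the number of parts of the refinement under control. This interplay between a fixed additive energy gain and an exponentially growing part-count is exactly what produces the tower-type bound on $M$.
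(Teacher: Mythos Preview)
Your proposal is a correct outline of the standard energy-increment proof of the Szemer\'edi Regularity Lemma. Note, however, that the paper does not prove this lemma at all: it is stated as a classical tool (Lemma~\ref{lem:psrl}) and invoked without proof, as is customary. So there is no ``paper's own proof'' to compare against; your sketch simply supplies the well-known argument that the authors take for granted.
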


   Let $X$, $Y$ be disjoint subsets of $V$, we define the density of the pair $(X,Y)$ as
   \begin{equation*}
     d(X,Y):=\frac{e(X,Y)}{|X|\cdot |Y|},
   \end{equation*}
   and we call a pair of vertex sets $X$ and $Y$ are \emph{$\epsilon$-regular}, if for all subsets $A\subseteq X$, $B\subseteq Y$ satisfying $|A|\geqslant \epsilon|X|$, $|B|\geqslant \epsilon|Y|$, we have
   \begin{equation*}
   \left|\frac{e(A,B)}{|A|\cdot|B|}-\frac{e(V_{i},V_{j})}{|V_{i}|\cdot|V_{j}|}\right|=\left|d(X,Y)-d(A,B)\right| \leqslant \epsilon.
\end{equation*}
   A partition of $V$ into $k+1$ sets $(V_0,V_{1},\ldots,V_{k})$ is called an \emph{$\epsilon$-regular} partition, if
   \begin{enumerate}
     \item $|V_0|<\epsilon n$ and for all $1\leqslant i<j\leqslant k+1$ we have $|V_{i}|=|V_{j}|$.
     \item all except $\epsilon k^{2}$ of the pairs $V_{i},V_{j}$, $1\leqslant i<j\leqslant k$, are $\epsilon$-regular.
   \end{enumerate}

Now we introduce an application of Szemer\'{e}di regularity lemma, which was proposed in \cite{2012DiestelGT}. Let $G$ be a graph with an $\epsilon$-regular partition $\{V_{0},V_{1},\ldots,V_{k}\}$, with exceptional set $V_{0}$ and $|V_{1}|=|V_{2}|=\dots=|V_{k}|=l$. Given $d\in[0,1],$ let $R$ be the graph on $\{V_{1},V_{2},\ldots,V_{k}\}$ in which two vertices $V_{i}$ and $V_{j}$ are adjacent in $R$ if and only if they form an $\epsilon$-regular pair in $G$ of density $\geqslant$ $d$. We shall call $R$ a regularity graph of $G$ with parameters $\epsilon$, $l$ and $d$. Given $s\in \mathbb{N}$, replace every vertex $V_{i}$ of $R$ by a set $V_{i}^{s}$ of $s$ vertices, and every edge by a complete bipartite graph between the corresponding $s$-sets, the resulting graph will be denoted as $R_{s},$ and we usually call $R_{s}$ an $s$-blow up graph of $R$. The following lemma says that subgraphs of $R_{s}$ can also be found in $G$ under some conditions.

\begin{lemma}\label{lemma:Counting}\textup{\cite{2012DiestelGT}}
 For all $d\in(0,1]$ and $\Delta \geqslant 1$ there exists an $\epsilon_{0}>0$ with the following property: if $G$ is any graph, $H$ is a graph with $\Delta(H)\leqslant \Delta$, $s\in \mathbb{N}$, and $R$ is any regularity graph with parameters $\epsilon\leqslant\epsilon_{0}$, $l\geqslant \max\{\frac{2s}{d^{\Delta}},d\}$, then we have
 \begin{equation*}
   H\subseteq R_{s} \ \Rightarrow H\subseteq G.
 \end{equation*}
\end{lemma}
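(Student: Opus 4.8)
The plan is to prove this by a greedy vertex-by-vertex embedding of $H$ into $G$, in which the $\epsilon$-regularity of the relevant pairs guarantees at each step that enough ``candidate'' vertices survive. First I would unpack the hypothesis $H\subseteq R_{s}$. An embedding of $H$ into the blow-up $R_{s}$ assigns to each vertex $u$ of $H$ a cluster $\sigma(u)\in\{V_{1},\dots,V_{k}\}$ so that (i) at most $s$ vertices of $H$ are sent to any one cluster; (ii) for every edge $uu'\in E(H)$ the clusters $\sigma(u)$ and $\sigma(u')$ are adjacent in $R$, hence form an $\epsilon$-regular pair of density at least $d$ in $G$; and (iii) two vertices of $H$ sharing a cluster are non-adjacent in $H$, since blow-up edges run only between distinct clusters. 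The goal is to realize $\sigma$ by an actual injection $\phi\colon V(H)\to V(G)$ with $\phi(u)\in\sigma(u)$ and $\phi(u)\phi(u')\in E(G)$ for every $uu'\in E(H)$.

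The engine of the argument is the standard consequence of regularity: if $(X,Y)$ is $\epsilon$-regular with $d(X,Y)\geqslant d$ and $Y'\subseteq Y$ satisfies $|Y'|\geqslant\epsilon|Y|$, then all but fewer than $\epsilon|X|$ vertices $x\in X$ have $|N(x)\cap Y'|\geqslant(d-\epsilon)|Y'|$. I would prove this in one line: if $\epsilon|X|$ or more vertices of $X$ failed the bound, they would form a set $A$ with $|A|\geqslant\epsilon|X|$ and density $d(A,Y')<d-\epsilon$, contradicting $|d(A,Y')-d(X,Y)|\leqslant\epsilon$. Call the exceptional vertices \emph{bad} for the pair $(Y,Y')$.

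Next I would fix an arbitrary order $u_{1},\dots,u_{h}$ of $V(H)$ and embed the vertices one at a time, maintaining for every not-yet-embedded vertex $u_{j}$ a candidate set $C_{j}\subseteq\sigma(u_{j})$, initialized as $C_{j}=\sigma(u_{j})$ so that $|C_{j}|=l$. When $u_{i}$ is embedded to a vertex $\phi(u_{i})=x_{i}\in C_{i}$, I update $C_{j}\leftarrow C_{j}\cap N_{G}(x_{i})$ for every neighbor $u_{j}$ of $u_{i}$ that is still unembedded. The invariant to carry is that if $u_{j}$ currently has $t_{j}$ embedded neighbors then $|C_{j}|\geqslant(d-\epsilon)^{t_{j}}l$: this holds because each update, by the regularity consequence applied with $Y=\sigma(u_{j})$ and $Y'=C_{j}$, multiplies $|C_{j}|$ by at least $(d-\epsilon)$ provided $x_{i}$ was not bad for $(\sigma(u_{j}),C_{j})$ and provided $|C_{j}|\geqslant\epsilon l$ so that regularity may be invoked. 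Since $\Delta(H)\leqslant\Delta$ we always have $t_{j}\leqslant\Delta$, so the invariant yields $|C_{j}|\geqslant(d-\epsilon)^{\Delta}l$; choosing $\epsilon_{0}$ small enough, depending only on $d$ and $\Delta$, that $(d-\epsilon_{0})^{\Delta}\geqslant\epsilon_{0}$ keeps every candidate set above the threshold $\epsilon l$ needed to apply regularity throughout.

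Finally I would verify that a legal choice of $x_{i}$ always exists. When $u_{i}$ is to be embedded, $|C_{i}|\geqslant(d-\epsilon)^{\Delta}l$, and I must pick $x_{i}\in C_{i}$ that is (a) distinct from the at most $s-1$ vertices already placed in the cluster $\sigma(u_{i})$, and (b) not bad for any of the at most $\Delta$ unembedded neighbors $u_{j}$ of $u_{i}$, so that the invariant survives the update. Each neighbor forbids fewer than $\epsilon l$ bad vertices, so at most $(s-1)+\Delta\epsilon l$ vertices of $C_{i}$ are forbidden, and it suffices that $(d-\epsilon)^{\Delta}l>(s-1)+\Delta\epsilon l$. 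Shrinking $\epsilon_{0}$ once more so that $(d-\epsilon_{0})^{\Delta}-\Delta\epsilon_{0}\geqslant\tfrac12 d^{\Delta}$, the hypothesis $l\geqslant 2s/d^{\Delta}$ gives $(d-\epsilon)^{\Delta}l-\Delta\epsilon l\geqslant\tfrac12 d^{\Delta}l\geqslant s>s-1$, so the forbidden vertices never exhaust $C_{i}$ and the embedding runs to completion, producing $H\subseteq G$. The main thing to get right is this balancing in the choice of $\epsilon_{0}$: it must be small enough to keep every candidate set above $\epsilon l$ through up to $\Delta$ contractions, so that regularity can be applied at all, yet the resulting guaranteed size $(d-\epsilon)^{\Delta}l$ must still outpace the $\Delta\epsilon l$ bad vertices plus the $s-1$ already-used ones; crucially $\epsilon_{0}$ must depend only on $d$ and $\Delta$, never on $s$ or $l$, which is exactly what the clean separation $(d-\epsilon_{0})^{\Delta}\geqslant\tfrac12 d^{\Delta}$ secures.
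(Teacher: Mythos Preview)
The paper does not supply its own proof of this lemma: it is quoted verbatim from Diestel's textbook and used as a black box. Your argument is correct and is precisely the standard greedy embedding proof given in that reference---maintain candidate sets $C_{j}\subseteq\sigma(u_{j})$, use $\epsilon$-regularity to ensure each embedded neighbour shrinks $C_{j}$ by at most a factor $(d-\epsilon)$, and balance $\epsilon_{0}$ against $d$ and $\Delta$ so that $(d-\epsilon)^{\Delta}l$ still dominates the at most $\Delta\epsilon l+(s-1)$ forbidden vertices---so there is nothing to add or correct.
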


 With all tools in hand, we are going to prove Theorem \ref{thm:weightedESS}.
 \begin{proof}[Proof of Theorem \ref{thm:weightedESS}]
   For arbitrary real number $t>0$, let $\epsilon=\frac{t}{9T}$, where
   \begin{equation*}
     T=\max\limits_{e\in E(K_{n})}w_{+}(e).
   \end{equation*}
   Suppose $G$ is a subgraph of $K_{n}$ and $G$ is $H$-free. We apply Szemer\'{e}di regularity lemma on $G$ and $\epsilon$, and with $m\geqslant 1/\epsilon$. We obtain an $\epsilon$-partition $\{V_{0},V_{1},\ldots, V_{k}\}$, with exceptional set $V_{0}$ and $|V_{1}|=|V_{2}|=\cdots=|V_{k}|=l$. Then let $R$ be the regularity graph of $G$, that is, $|V(R)|=k$ and for every $1\leqslant i<j\leqslant k$, $V_{i}$ and $V_{j}$ are adjacent in $R$ if and only if $V_{i}$ and $V_{j}$ are $\epsilon$-regular in $G$ and with edge density at least $10\epsilon$. We define a function $f$: $V(R)\mapsto [0,\infty)$ where $f(V_{i})=\frac{\sum_{v\in V_{i}}w(v)}{\left|V_{i}\right|}.$ Let $F$ be a complete graph based on $V(R)$, that is, $\left|V(F)\right|=k$ and it has the same vertex weight as $R$. \\

   Since $H$ is a finite graph with chromatic number $\chi(H)$ and $\Delta(H)\leqslant\Delta,$ $H$ is a subset of some complete $\chi(H)$-partite graph. Now we claim $R$ is $K_{\chi(H)}$-free, otherwise there exists some constant $s$, such that $H\subseteq R_{s}$. It's easy to check that $l=\frac{n-|V_{0}|}{k}> \max \{\frac{2s}{(10\epsilon)^{2}},10\epsilon\}$, hence $H\subseteq G$ by Lemma \ref{lemma:Counting}, which is a contradiction. Then by the result of $K_{l}$-free graphs in Theorem \ref{thm:Kl}, we have
   \begin{equation*}\label{eq:1}
     \sum\limits_{e\in E(R)}w_{+}(e)\leqslant \max\limits_{\mathcal{P}}\sum\limits_{P\in \mathcal{P}}(k-\left|P\right|)f(P),
   \end{equation*}
   where the maximum is taken over all partitions $\mathcal{P}$ of $V(F)$ into $\chi(H)-1$ parts.
   Let $\hat{G}$ be the $|V_{i}|$-blow up graph of $R$, that is, we construct $\hat{G}$ by replacing every vertex of $R$ with an independent set of $|V_{i}|$ vertices, and every edge with a complete bipartite graph between the corresponding independent sets. $\hat{G}$ has the same vertex weight as $G$, we have

   \begin{flalign*}
     \sum_{e\in E(\hat{G})}w_{+}(e)= &\left|V_{i}\right|^{2}\max\limits_{\mathcal{P}}\sum\limits_{P\in \mathcal{P}}(k-\left|P\right|)f(P) \\
     = & \max\limits_{\mathcal{P}}\sum\limits_{P\in\mathcal{P}}(n-\left|P\right|\cdot\left|V_{i}\right|)\sum\limits_{V_{i}\in \mathcal{P}}w(V_{i})  \\
     \leqslant & \max\limits_{R}\sum_{P\in \mathcal{R}}(n-\left|P\right|)w(P),
   \end{flalign*}
     where the maximum is taken over all partitions $\mathcal{R}$ of $V(K_{n})$ into $\chi(H)-1$ parts. It's clear that
     \begin{equation*}
       \sum\limits_{1\leqslant i<j\leqslant k}\sum\limits_{e\in E'}w_{+}(e)\leqslant \sum\limits_{e\in E(\hat{G})}w_{+}(e),
     \end{equation*}
     where $E'\subseteq E(G)$ contains all edges between $V_i,V_j$ ($1\leqslant i,j\leqslant k$) in $G$, for every $\epsilon$-regular pair $(V_{i},V_{j})$ with density at least $10\epsilon$.

      Through the above analysis, we can classify the edges in $E(G)$ but not in $E(\hat{G})$ as follows.
     \begin{enumerate}
       \item Edges $(x,y)\in V_{i}\times V_{j}$ where $(V_{i},V_{j})$ ($1\leqslant i,j\leqslant k$) are not $\epsilon$-regular.
       \item Edges $(x,y)\in V_{i}\times V_{j}$ where the density of $(V_{i},V_{j})$ is less than $10\epsilon$.
       \item $x\in V_i$, where $0\leqslant i\leqslant k$.
       \item Edges $(x,y)\in V_{0}\times V_i$,  where $1\leqslant i\leqslant k$.
     \end{enumerate}

     Since $T$ is the maximum edge weight in weighted graph $V(K_{n})$, then we can estimate the total weight of the edges in $E(G)$ but not in $E(\hat{G})$.

     \begin{equation*}
       \sum\limits_{e\in E(G)\setminus E(\hat{G})}w_{+}(e)< \Big[\epsilon k^{2}\Big(\frac{n}{k}\Big)^{2}+10\epsilon\binom{k}{2}\Big(\frac{n}{k}\Big)^{2}+k\Big(\frac{n}{k}\Big)^2+\binom{\epsilon n}{2} +\epsilon n\Big(\frac{n}{k}\Big)k\Big]T <9\epsilon Tn^{2}=tn^{2}.
     \end{equation*}
   Through the above analysis, we obtain that
   \begin{equation*}
     \sum\limits_{e\in E(G)}w_{+}(e)\leqslant \max\limits_{\mathcal{R}}\sum\limits_{P\in \mathcal{R}}(n-\left|P\right|)w(P)+tn^{2}.
   \end{equation*}
  Since we take $t$ arbitrarily, the proof of Theorem \ref{thm:weightedESS} is finished.
 \end{proof}

 Now we consider the product-edge-weight \turan problem, which was introduced by Bennett et al.~\cite{2018BennettProduct}. Let $w:V(G)\to[0,\infty)$ be a vertex weight function and $w_{\pi}: E(G)\to [0,\infty)$ be the product-edge-weight function such that $w_{\pi}(uv)=w(u)w(v)$. The following result is implied by Proposition \ref{prop:ProductKl} and Theorem~\ref{thm:weightedESS}, we omit further details.

\begin{corollary}
  Fix a graph $H$, as $n \rightarrow \infty$, we have
  \begin{equation*}
    ex(n,w_{\pi},H)=\max\limits_{\mathcal{R}}\sum\limits_{
    \begin{subarray}{c}
   P,P'\in\mathcal{R}\\
   P\neq P'
   \end{subarray}}
   w(P')w(P)+o(n^{2}),
  \end{equation*}
  where $w(P)=\sum\limits_{v\in P}w(v)$ and the maximum is taken over all the partitions $\mathcal{R}$ of $V(K_{n})$ into $\chi(H)-1$ parts.
\end{corollary}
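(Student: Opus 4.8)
\medskip
\noindent\textbf{Proof proposal.}
The plan is to rerun the Szemer\'{e}di-regularity argument from the proof of Theorem~\ref{thm:weightedESS} word for word, with the sum-edge-weight function $w_{+}$ replaced by $w_{\pi}$ and with Proposition~\ref{prop:ProductKl} used in place of Theorem~\ref{thm:Kl}. The lower bound $ex(n,w_{\pi},H)\geqslant\max_{\mathcal{R}}\sum_{P\neq P'}w(P)w(P')$ is immediate: for any partition $\mathcal{R}$ of $V(K_{n})$ into $\chi(H)-1$ parts, the corresponding complete $(\chi(H)-1)$-partite graph is $(\chi(H)-1)$-colorable, hence contains no copy of $H$, and its $w_{\pi}$-weight equals the summand appearing in the statement. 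So everything is in the matching upper bound up to an $o(n^{2})$ term.

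For the upper bound, fix $t>0$, put $T=\max_{e\in E(K_{n})}w_{\pi}(e)$, set $\epsilon=t/(9T)$, and choose $t$ small enough that the Counting Lemma (Lemma~\ref{lemma:Counting}) can be applied with $\Delta=\Delta(H)$. Given an $H$-free $G\subseteq K_{n}$, apply the regularity lemma to get an $\epsilon$-regular partition $\{V_{0},V_{1},\dots,V_{k}\}$ with $|V_{1}|=\cdots=|V_{k}|=l$, form the regularity graph $R$ on $\{V_{1},\dots,V_{k}\}$ retaining only the $\epsilon$-regular pairs of density at least $10\epsilon$, and give $R$ the vertex weights $f(V_{i})=|V_{i}|^{-1}\sum_{v\in V_{i}}w(v)$. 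As in the proof of Theorem~\ref{thm:weightedESS}, $R$ is $K_{\chi(H)}$-free: $H$ embeds into a complete $\chi(H)$-partite graph, hence into some blow-up $(K_{\chi(H)})_{s}$, so $K_{\chi(H)}\subseteq R$ would give $H\subseteq R_{s}$ and then $H\subseteq G$ by the Counting Lemma, contradicting $H$-freeness. Proposition~\ref{prop:ProductKl} applied to $R$ with the weight $f$ therefore gives
\[
\sum_{V_{i}V_{j}\in E(R)}f(V_{i})f(V_{j})\ \leqslant\ \max_{\mathcal{P}}\sum_{\substack{P,P'\in\mathcal{P}\\ P\neq P'}}f(P)f(P'),
\]
the maximum over partitions $\mathcal{P}$ of $V(R)$ into $\chi(H)-1$ parts.

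Next I would transfer this back to $K_{n}$. For each edge $V_{i}V_{j}$ of $R$ the $w_{\pi}$-weight of the $G$-edges between $V_{i}$ and $V_{j}$ is at most $\sum_{x\in V_{i}}\sum_{y\in V_{j}}w(x)w(y)=w(V_{i})w(V_{j})=l^{2}f(V_{i})f(V_{j})$; summing over $E(R)$ and using the exact identity $l\,f(P)=\sum_{V_{i}\in P}\sum_{v\in V_{i}}w(v)=w(Q_{P})$, where $Q_{P}=\bigcup_{V_{i}\in P}V_{i}$, the total $w_{\pi}$-weight of the ``good'' edges of $G$ (those lying between an $\epsilon$-regular pair of density at least $10\epsilon$) is bounded by
\[
l^{2}\max_{\mathcal{P}}\sum_{P\neq P'}f(P)f(P')\ =\ \max_{\mathcal{P}}\sum_{P\neq P'}w(Q_{P})w(Q_{P'})\ \leqslant\ \max_{\mathcal{R}}\sum_{\substack{P,P'\in\mathcal{R}\\ P\neq P'}}w(P)w(P'),
\]
the last step because $\{Q_{P}\}$ is a partition of $V(K_{n})\setminus V_{0}$ into $\chi(H)-1$ classes and distributing the vertices of $V_{0}$ among these classes never decreases $\sum_{P\neq P'}w(P)w(P')$, all weights being nonnegative. (This blow-up identity is exact for the product weight, in contrast to the sum case where one only has $l(k-|P|)=n-|V_{0}|-l|P|$.) Finally the ``bad'' edges of $G$ fall into the same four families as in the proof of Theorem~\ref{thm:weightedESS} --- inside an irregular pair (at most $\epsilon k^{2}$ pairs), inside a pair of density below $10\epsilon$, inside a single $V_{i}$, and meeting $V_{0}$ --- each of total size $O(\epsilon n^{2})$, so their $w_{\pi}$-weight is at most $9\epsilon Tn^{2}=tn^{2}$. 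Combining, $w_{\pi}(G)\leqslant\max_{\mathcal{R}}\sum_{P\neq P'}w(P)w(P')+tn^{2}$, and since $t>0$ was arbitrary this finishes the proof.

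The only point that needs any care is the one already present in Theorem~\ref{thm:weightedESS}: the density threshold $10\epsilon$ must be compatible with the constant $\epsilon_{0}=\epsilon_{0}(d,\Delta)$ supplied by the Counting Lemma and with its size requirement $l\geqslant\max\{2s/(10\epsilon)^{\Delta},10\epsilon\}$. This is arranged by fixing $\Delta=\Delta(H)$ first, then taking $t$ (hence $\epsilon$) small, and using that $l=(n-|V_{0}|)/k\to\infty$ as $n\to\infty$ while $k$ stays bounded by the regularity-lemma constant $M$. Everything else --- the Counting-Lemma step, the per-edge transfer bound, and the accounting of the four edge families --- is identical to the sum-edge-weight case, so the corollary indeed follows from Proposition~\ref{prop:ProductKl} together with the method of Theorem~\ref{thm:weightedESS}.
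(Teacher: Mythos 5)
Your proposal is correct and follows exactly the route the paper intends: the paper itself omits the proof, stating only that the corollary ``is implied by Proposition \ref{prop:ProductKl} and Theorem~\ref{thm:weightedESS}'', and your argument fills in precisely those details by rerunning the regularity/counting argument with $w_{\pi}$ in place of $w_{+}$ and Proposition \ref{prop:ProductKl} in place of Theorem \ref{thm:Kl}. The exact blow-up identity $l^{2}f(P)f(P')=w(Q_{P})w(Q_{P'})$ and the observation that absorbing $V_{0}$ never decreases the product sum are the right small adjustments.
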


\section{Conclusion}\label{sec:Conclusion}
In this paper, we consider the vertex-induced weighted \turan problems introduced by Bennett et al.~\cite{2018BennettProduct}. Our contributions are mainly focused on the sum-edge-weight \turan problems. We first characterize the extremal structure of $K_{l}$-free graph under sum-edge-weight function and give the similarities and differences with the previous structure under product-edge-weight function. We also show the stability result of $K_{l}$-free graphs. Based on the results of complete graphs, we further propose a generalized version of the \erdos-Stone theorem for weighted graphs under both of the two types of vertex-induced weight functions. In future research, we hope to find more applications that can take advantage of these results.

\section{Acknowledgement}\label{sec:Acknowledgement}
The first author thanks his teammates Chengfei Xie for his observation, and Wenjun Yu, Xiangliang Kong for their helpful comments, he also thanks Professor Baogang Xu for the  guidance during his short visit at Capital Normal University.

\bibliographystyle{plain}
\bibliography{TuranXzxref}
\end{document}